\newtheorem{theorem}{Theorem}[section]
\newtheorem{lemma}{Lemma}[section]
\newtheorem{corollary}{Corollary}[section]
\newtheorem{remark}{Remark}[section]
\newtheorem{definition}{Definition}[section]
\begin{document}
%------------------------------------------------------------------------------
 
\title{A novel multigrid method for elliptic distributed control problems}

\author{Yunhui He\thanks{Department of Computer Science, The University of British Columbia, Vancouver, BC, V6T 1Z4, Canada,   \tt{yunhui.he@ubc.ca}.}  }

\maketitle

\begin{abstract}
 Large linear systems of saddle-point type  have arisen in a wide variety of applications throughout computational science and engineering.  The discretizations of distributed control problems have a saddle-point structure.  The numerical solution of saddle-point problems has attracted considerable interest in recent years.   
 In this work, we propose a novel  Braess-Sarazin  multigrid  relaxation scheme  for finite element discretizations of the distributed control problems, where we use  the stiffness matrix   obtained from the  five-point finite difference method for the Laplacian to approximate the inverse of the mass matrix arising in the saddle-point system.   We apply local Fourier analysis to examine   the smoothing properties of the Braess-Sarazin multigrid relaxation.  From our analysis, the optimal smoothing factor for  Braess-Sarazin relaxation is derived.  Numerical experiments  validate our theoretical results. The relaxation scheme considered here shows its high efficiency and robustness with respect to the regularization parameter and grid size.
\end{abstract}

\vskip0.3cm {\bf Keywords.}
Multigrid methods, distributed optimal control, saddle-point problem, Braess-Sarazin relaxation,  local Fourier analysis

\vspace{2mm} 
 2000 MSC: 	49M25, 49K20, 65N55, 65F10

%=====================================================
\section{Introduction} \label{sec:intro}
 
 In this paper we consider the following two-dimensional elliptic optimal control
problem  \cite{rees2010optimal}: Find the state $u^{*} \in H^{1}(\Omega)$ and the control $f^{*}\in L^{2}(\Omega)$ such that
\begin{equation}\label{eq:Control-problem}
  \mathcal{J}(u^{*},f^{*}) = \displaystyle\min_{(u,f)\in H^{1}(\Omega)\times L^{2}(\Omega)}\mathcal{J}(u,f),
\end{equation}
subject to the state equation
\begin{equation}\label{eq:sub-state-equ}
\left\{
  \begin{aligned}
  -\triangle u &=&f,\,\,\,\, {\rm in}\,\,\Omega,\\
  u&=&g,\,\,\, {\rm at}\,\,\, \partial \Omega,\\
   \end{aligned}
               \right. \\
\end{equation}
with cost functional
\begin{equation}\label{eq:cost-fun}
  \mathcal{J}(u,f) = \frac{1}{2}\|u-\hat{u}\|^2_{L^2(\Omega)}+\beta\|f\|^2_{L^2(\Omega)},
\end{equation}
where $\hat{u}$ is the desired state and $\beta>0$ is the weight of the cost of the control
(or simply a regularization parameter).

The discretization of the optimization problem \eqref{eq:Control-problem} leads to a large scale saddle-point system.  Since  multigrid methods offer
the possibility of solving problems with $N$ unknowns using $O(N)$ work and storage, and lead to substantial improvements in computational efficiency over direct methods, they have gained growing interest in the area of optimal control problems, see,  for example \cite{borzi2003multigrid,borzi2007high,borzi2009multigrid, engel2011multigrid, schoberl2011robust, simon2009schwarz, takacs2011convergence}.

The choice of multigrid smoother is crucial to construct efficient multigrid algorithms.  Local Fourier analysis (LFA) is a useful tool to  help choose multigrid components, such as relaxation schemes and grid-transfer operators, and can quantitatively predict the two-grid convergence factor.  The main goal of this work is to construct and analyze  multigrid methods  for the discrete optimal control problems.  Note that there are a few studies of LFA applied to the optimal control problems.  In \cite{kepler2009fourier},  the smoothing factor for a one-dimensional optimal control problem was proposed by LFA for a special smoother. \cite{borzi2009multigrid} reviewed some recent efforts and outlined recent developments in the field of multigrid methods for partial differential equation optimization, where LFA was applied to study collective Gauss–Seidel methods. The finite difference multigrid solution of an optimal control problem associated with an elliptic equation was  considered in \cite{borzi2002accuracy}, and LFA was used to estimate the convergence factor for the collective Gauss–Seidel relaxation. 
 
 In this work,  we consider  finite element  monolithic multigrid solution of the  elliptic distributed control problem  \eqref{eq:Control-problem}, that is, employing multigrid methods directly to the discrete system in coupled form.   In contrast to collective smoothers, we consider a well-known Braess-Sarazin-type relaxation (BSR) scheme, which was original  designed for the Stokes equations \cite{braess1997efficient}.   In our recently work \cite{CH2021addVanka}, we have proved that the mass matrix  obtained from bilinear finite elements is a good approximation to the inverse of the scalar Laplacian discretized by a five-point finite difference method, where the optimal smoother factor of $\frac{1}{3}$ is derived for the Laplacian. Furthermore, we extended this mass-based approximation to a Braess-Sarazin-type relaxation scheme for the Stokes equations \cite{YH2021massStokes}, where we obtained the optimal smoother factor of $\frac{1}{3}$  for Stokes. This motivates us to consider the stiffness matrix derived from the finite-difference discrerization of  the scalar Laplacian to approximate the mass matrix arsing from the finite element discretizations of optimal control problems.  In the resulting BSR relaxation, we can directly apply the stiffness matrix and there is no need to invert a matrix. LFA is applied  to examine this stiffness-based BSR applied to the optimal control problems considered here. We derive  optimal smoothing factor of $\frac{1}{3}$ for the stiffness-based BSR for the optimal control problems. Considering practical use, an inexact version of stiffness-based Braess-Sarazin relaxation is developed, where a  3 $V(2,2)$-cycles  multigrid  with weighted Jacobi relaxation is used to approximate the solution of Schur complement system appeared in the exact BSR process.  This inexact version preserves the optimal smoothing factor of $\frac{1}{3}$ obtained from exact BSR. The LFA  convergence estimates agree very well with results of numerical experiments and   are independent of the grid size and of the value of the control parameter.

This paper is organized as follows.  In Section \ref{sec: Discretization-relaxation}, we propose a  stiffness-based Braess-Sarazin relaxation scheme for the  elliptic distributed control problem.  
In Section \ref{sec:smoothing-analysis}, we apply LFA to the proposed stiffness-based Braess-Sarazin relaxation, and we obtain a highly satisfactory convergence factor. Numerical results are reported in Section \ref{sec:Numer} to confirm our theoretical analysis. Finally, we draw conclusions in Section \ref{sec:concl}.

\section{Discretization and relaxation}\label{sec: Discretization-relaxation}

In our work, we  discretize  problem \eqref{eq:Control-problem}  using bilinear quadrilateral $Q_1$ finite elements. The discretize formulations of \eqref{eq:sub-state-equ} and \eqref{eq:cost-fun} are given by
\begin{eqnarray}
&&\min_{u_h,f_h}\frac{1}{2}\|u_h-\hat{u}\|^2_{L^2(\Omega)}+\beta\|f_h\|^2_{L^2(\Omega)},\label{eq:weak-form1}\\
&&{\rm such\,\, that} \int \nabla u_h v_h = \int v_h f_h, \,\,\forall v_h\in V_0^h, \label{eq:weak-form2}
\end{eqnarray}
where $V_0^h$ is the $Q_1$ finite element space. With a Lagrange multiplier ($\tau$) approach to  \eqref{eq:weak-form1} and  \eqref{eq:weak-form2}, it typically leads to a linear system of the form  \cite{rees2010optimal}
 \begin{equation}\label{eq:3-block-system}
\begin{pmatrix}
2 \beta M   &     0     & -M^T\\
0              & M       &K^T \\
 -M           &K       & 0
 \end{pmatrix}
\begin{pmatrix} f_h \\ u_h \\ \tau_h \end{pmatrix}
 =b_h,
 \end{equation}
 where $K$ is  the stiffness matrix (the discrete Laplacian) and  $M$ is the discrete mass matrix. For more details, we refer to \cite{rees2010optimal}.
 
We can rewrite  system  \eqref{eq:3-block-system} as a saddle-point system, that is,
 \begin{equation}\label{eq:saddle-structure}
       \mathcal{L}_{h}  z=\begin{pmatrix}
      A & B^{T}\\
     B & 0\\
    \end{pmatrix}
        \begin{pmatrix} x \\ y
        \end{pmatrix}
  =b_h,
 \end{equation}
where  
\begin{equation*}
A=\begin{pmatrix}
2 \beta M   &     0 \\
0              & M  
\end{pmatrix},\quad  
B= \begin{pmatrix}
-M  &     K
\end{pmatrix}.
\end{equation*}
%  
%====================================================
Here, we consider multigrid methods for solving \eqref{eq:saddle-structure}. One important process in multgrid is the smoothing, where we update a current approximation $z_k$ via 
\begin{equation}\label{eq:general relaxation-form}
\hat{z}_{k} = z_k-\omega \mathcal{K}^{-1}_h (b_h-  \mathcal{L}_{h}  z_k)=z_k-\omega \delta z,
\end{equation}
where $\mathcal{K}^{-1}_h$ is an approximation to  $\mathcal{L}_{h} $, and $\delta z =\mathcal{K}^{-1}_h (b_h  - \mathcal{L}_{h} z_k)$.   Then, the error-propagation for the relaxation scheme \eqref{eq:general relaxation-form} is 
\begin{equation}\label{eq:Relaxation-error-S}
\mathcal{S}_h=I-\omega   \mathcal{K}^{-1}_h \mathcal{L}_h.
\end{equation}
The choices of $\mathcal{K}_h$ and $\omega$  play a crucial role in determining the convergence speed of multigrid methods.   For
simplicity, throughout the rest of this paper, we drop the subscript $h$, except when necessary for clarity.

%====================================================================================================
In this work, we consider  Braess-Sarazin relaxation \cite{braess1997efficient}, that is,   $\mathcal{K}$ has the form of
\begin{equation}\label{eq:Precondtion}
   \mathcal{K}=  \begin{pmatrix}
    \alpha C & B^{T}\\
     B & 0\\
    \end{pmatrix},
\end{equation}
where $C$ is an approximation of $A$, whose inverse is easy to apply, and $\alpha>0$. 

With $\mathcal{K}$ defined in \eqref{eq:Precondtion}, we compute the update $\delta z=(\delta x, \delta y)$ in \eqref{eq:general relaxation-form} by  the following two stages
\begin{eqnarray}
  (BC^{-1}B^{T})\delta y&=&BC^{-1}r_{x}-\alpha r_{y}, \label{eq:solution-of-precondtion-s1}\\
  \delta x&=&\frac{1}{\alpha}C^{-1}(r_{x}-B^{T}\delta y),\label{eq:solution-of-precondtion-s2}
\end{eqnarray}
where $(r_{x},r_{y})=b_h-\mathcal{L}z_k$.
%======================================================================================

Note that, in \eqref{eq:solution-of-precondtion-s1} and   \eqref{eq:solution-of-precondtion-s2}, we can directly consider an approximation to  $C^{-1}$.   In our recently work \cite{CH2021addVanka}, we have shown that the mass matrix $M$ derived from bilinear finite elements is a good approximation to the inverse of the scalar Laplacian discretized by the five-point finite difference method. This inspires us to  consider the  discrete scalar Laplacian approximating  the inverse of the mass matrix $M$ in  the diagonal part of  the coefficient matrix defined in \eqref{eq:3-block-system}. Let $A_{fd}$ be the stiffness matrix obtained from the five-point finite difference discretization for the scalar Laplacian. In \eqref{eq:Precondtion}, we consider 
\begin{equation}\label{eq:C-inverse-Q}
C^{-1}=C^{-1}_f = 
\begin{pmatrix}
\frac{1}{2\beta}A_{fd} &  0\\
0 &  A_{fd}
\end{pmatrix},
\end{equation}
and denote the corresponding  $\mathcal{K}$ as 
\begin{equation}\label{eq:Precondtion-K1}
   \mathcal{K}_f=  \begin{pmatrix}
    \alpha C_f& B^{T}\\
     B & 0\\
    \end{pmatrix}.
\end{equation}
To choose suitable values of  the parameters $\omega$ and $\alpha$, we consider LFA to examine the relaxation scheme with $\mathcal{K}=\mathcal{K}_f$ in the following section.
%
%=====================================================================================================

\section{Smoothing analysis}\label{sec:smoothing-analysis} 
LFA \cite{MR1807961, wienands2004practical} has been widely used to analyze  actual multigrid performance for different types of problems, by investigating  the spectral radius of {\it symbol} of the underlying operator.  Specifically, the LFA smoothing factor  with simplifying  assumptions on boundary conditions offers a  sharp convergence estimate of multigrid methods. Thus, in this work, we employ LFA to examine the LFA smoothing factor for  BSR,  and help choose relaxation parameters.  Next, we first give a brief introduction to LFA. 

We consider two-dimensional infinite uniform grids $\mathbf{G}_{h}$
\begin{equation*}
  \mathbf{G}_{h}=\left\{\boldsymbol{x}:=(x_1,x_2)=\boldsymbol{k}h=(k_{1},k_{2})h, k\in \mathbb{Z}\right\},
\end{equation*}
and Fourier modes $\varphi(\boldsymbol{\theta},\boldsymbol{x})=e^{\imath \boldsymbol{\theta}\boldsymbol{x}/h}$ on $\mathbf{G}_{h}$, where $\boldsymbol{\theta}=(\theta_1,\theta_2)$ and $\imath^2=-1$.  Since $\varphi(\boldsymbol{\theta},\boldsymbol{x})$ is periodic in $\boldsymbol{\theta}$ with period $2\pi$, we consider $\theta_i\in\big(-\frac{\pi}{2},\frac{3\pi}{2}\big]$. The coarse grid $  \mathbf{G}_{2h}$ is defined similarly.

Let $L_h$ be a Toeplitz operator acting on $\mathbf{G}_{h}$ as follows \cite{MR1807961},
\begin{eqnarray*} 
  L_{h}w_{h}(\boldsymbol{x})&=&\sum_{\boldsymbol{\kappa}\in\boldsymbol{V}}s_{\boldsymbol{\kappa}}w_{h}(\boldsymbol{x}+\boldsymbol{\kappa}h),
  \end{eqnarray*}
with constant coefficients $s_{\boldsymbol{\kappa}}\in \mathbb{R}$, where $w_{h}(\boldsymbol{x})$ is a function in $l^{2} (\mathbf{G}_{h})$. Here, $\boldsymbol{V}$ is a finite index set. 

\begin{definition}\label{formulation-symbol}
  We call $\widetilde{L}_{h}(\boldsymbol{\theta})=\displaystyle\sum_{\boldsymbol{\kappa}\in\boldsymbol{V}}s_{\boldsymbol{\kappa}}e^{\iota \boldsymbol{\theta}\cdot \boldsymbol{\kappa}}$ the symbol of $L_{h}$.
\end{definition}

We consider multigrid methods for finite-element discretizations with standard geometric grid coarsening, that is, we construct a sequence of coarse grids by doubling the mesh size in each spatial direction.  High and low frequencies for standard coarsening are given by
\begin{equation*}
  \boldsymbol{\theta}\in T^{{\rm low}} =\left[-\frac{\pi}{2},\frac{\pi}{2}\right)^{2}, \, \boldsymbol{\theta}\in T^{{\rm high}} =\displaystyle \left[-\frac{\pi}{2},\frac{3\pi}{2}\right)^{2} \bigg\backslash \left[-\frac{\pi}{2},\frac{\pi}{2}\right)^{2}.
\end{equation*}
\begin{definition} The LFA smoothing factor for  the error-propagation for the relaxation scheme $\mathcal{S}$, see \eqref{eq:Relaxation-error-S}, is defined as
\begin{equation}\label{eq:LFA-mu-form}
  \mu_{{\rm loc}}(\boldsymbol{p})=\max_{\boldsymbol{\theta}\in T^{{
  \rm high}}}\big\{\big|\lambda(\widetilde{\mathcal{S}}(\boldsymbol{\theta}))\big| \,\,\big\},
\end{equation}
where  $\boldsymbol{p}$ is algorithmic parameters,  and $\lambda\big(\widetilde{\mathcal{S}}(\boldsymbol{\theta})\big)$ denotes the  eigenvalue of symbol $\widetilde{\mathcal{S}}(\boldsymbol{\theta})$. 
\end{definition}
Note that for the $Q_1$  elements considered in this work,  $\widetilde{\mathcal{S}}(\boldsymbol{\theta})$ is a $3\times 3$ matrix. Since $\mu_{{\rm loc}}(\boldsymbol{p})$ is a function of   $\boldsymbol{p}$, we can minimize  $\mu_{{\rm loc}}(\boldsymbol{p})$ to obtain a fast convergence speed.  Thus, we define the optimal smoothing factor as follows.
\begin{definition}
The optimal LFA  smoothing factor  for  the error-propagation for the relaxation scheme $\mathcal{S}$ is defined as
  \begin{equation*}
    \mu_{{\rm opt}}=\min_{\boldsymbol{p} \in \Upsilon}{\mu_{{\rm loc}}}(\boldsymbol{p}),
  \end{equation*}
  where $\Upsilon$ is the set of allowable parameters. 
\end{definition}

Another important LFA factor is the LFA two-grid convergence factor.  In general, the two-grid error-propagation can be expressed as
\begin{equation*}
E_h = \mathcal{S}^{\nu_2} (I-P (\mathcal{L}_{2h})^{-1} R \mathcal{L}_h)\mathcal{S}^{\nu_1},
\end{equation*} 
where $\mathcal{L}_{2h}$ is the coarse grid operator,  $R$ and $P$ are the restriction and interpolation operators, respectively.  The integers $\nu_1$ and $\nu_2$ are pre- and post- smoothing steps, respectively.  

\begin{definition}
The   LFA  two-grid convergence factor for $E_h$ is defined as
  \begin{equation}\label{eq:LFA-rho-form}
    \rho=\max_{\boldsymbol{\theta}\in T^{{
  \rm low}}} \left\{\rho(\widetilde{E}_h(\boldsymbol{\theta}))\right\} ,
  \end{equation}
  where $\widetilde{E}_h(\boldsymbol{\theta})$ is  the two-grid LFA symbol of  $E_h$, and $\rho(\widetilde{E}_h)$ denotes the spectral radius of matrix $\widetilde{E}_h$.
\end{definition}
Note that in our case,  $\widetilde{E}_h(\boldsymbol{\theta})$  is a $12\times 12$ matrix. For more details on how to compute two-grid symbol, we refer to  \cite{MR1807961}. Since it is easy to compute the LFA smoothing factor, \eqref{eq:LFA-mu-form}, compared to the LFA two-grid convergence factor, \eqref{eq:LFA-rho-form}, we focus on the analysis of LFA smoothing factor in  the following.
 %==============================================================================================
 \subsection{Fourier representation of mass and stiffness operators}
In one dimension (1D), the  stiffness and mass stencils of $Q_1$ elements are
\begin{equation}\label{eq:1D-Q1Q1-stencil}
K_1=\frac{1}{h}\begin{bmatrix}  -1 & 2 &-1  \end{bmatrix} \quad \text{and} \quad 
M_1=\frac{h}{6}\begin{bmatrix}  1 & 4 &1  \end{bmatrix},
\end{equation}
respectively.

Then, the stiffness stencil for bilinear discretization in two dimensions (2D)  is given by
\begin{eqnarray*}
  K &=& K_1\otimes M_1+M_1\otimes K_1 \nonumber\\
   &=& \frac{1}{3}\begin{bmatrix}
   -1 & -1 &-1\\
   -1 & 8 &-1\\
   -1 & -1 &-1
   \end{bmatrix}, 
\end{eqnarray*}
and the mass stencil for bilinear discretization  in 2D is given by
\begin{eqnarray}
  M &=& M_1\otimes M_1 \nonumber\\
   &=& \frac{h^2}{36}\begin{bmatrix}
   1 &  4    & 1\\
   4 &  16   & 4\\
   1 &  4    & 1
   \end{bmatrix}.\label{eq:mass-2D-stencil}
\end{eqnarray}
From  \eqref{eq:1D-Q1Q1-stencil}, the symbols of the stiffness and mass stencils in 1D are
\begin{equation}\label{eq:1D-stencil-symbol}
\widetilde{K}_1(\theta)=\frac{2}{h}(1-\cos\theta),\,\,\,
\widetilde{M}_1(\theta)=\frac{h}{3}(2+\cos\theta).
\end{equation}
Based on \eqref{eq:1D-stencil-symbol}, the symbol of $K$ can be written as
\begin{eqnarray*}
  \widetilde{K}(\theta_1,\theta_2) &=&\widetilde{K}_1(\theta_2)\widetilde{M}_1(\theta_1)+
  \widetilde{M}_1(\theta_2) \widetilde{K}_1(\theta_1) \nonumber \\
  &=&\frac{2}{3}(4-\cos\theta_1-\cos\theta_2-2\cos\theta_1\cos\theta_2),
\end{eqnarray*}
and the symbol of $M$ is
\begin{eqnarray*}
  \widetilde{M}(\theta_1,\theta_2) &=&\widetilde{M}_1(\theta_2) \widetilde{M}_1(\theta_1)\nonumber \\
  &=&\frac{h^2}{9}(4+2\cos\theta_1+2\cos\theta_2+\cos\theta_1\cos\theta_2).
\end{eqnarray*}
%==================================================================================
We introduce  the stencil of the Laplacian $-\triangle_{h}$ discretized by the standard five-point scheme, given by
\begin{equation*}
 A_{fd}=  -\triangle_{h} =\frac{1}{h^2}\begin{bmatrix}
       & -1 &  \\
      -1 & 4 & -1 \\
        & -1 &
    \end{bmatrix}.
 \end{equation*}
Then,  the symbol of $A_{fd}$ is
\begin{equation}\label{eq:scalar-symbol-Laplace}
\widetilde{A}_{fd} =\frac{4-2\cos\theta_{1}-2\cos\theta_2}{h^2}.
\end{equation}
For simplicity, let $a= \widetilde{M}$ and $\hat{a}=(\widetilde{A}_{fd})^{-1}$.
Using \eqref{eq:scalar-symbol-Laplace}   gives
\begin{equation} \label{eq:AM-MA-symbol}
\widetilde{A}_{fd} \widetilde{M} =\widetilde{M} \widetilde{A}_{fd}= \frac{a}{\hat{a}}.
\end{equation}
%====================================================

%==============================================================================================
\subsection{LFA for Braess-Sarazin relaxation scheme}\label{subsec:BSR-analytical-result}
 
Now, we examine  Braess-Sarazin-type algorithms, which  were originally developed as a relaxation scheme for the Stokes equations \cite{braess1997efficient}.   In practice,  \eqref{eq:solution-of-precondtion-s1} is not solved exactly. Thus, in the following, we first consider  exact solve for  \eqref{eq:solution-of-precondtion-s1}, and then inexact one.
%
%
%==================================================================
 
 {\bf Exact Braess-Sarazin relaxation}: In \eqref{eq:Precondtion},   we consider $C^{-1}$  defined in \eqref{eq:C-inverse-Q} and $\alpha=1$. We first derive the corresponding optimal smoothing factor for the exact Braess-Sarazin relaxation.   We  refer to the relaxation scheme as  stiffness-based BSR. 
Let $b= \widetilde{K}(\theta_1,\theta_2) $.
The symbol of operator $\mathcal{L}$ defined in \eqref{eq:saddle-structure} is given by  
\begin{equation*} 
  \widetilde{\mathcal{L}}(\theta_1,\theta_2) = \begin{pmatrix}
       2\beta a&       0 &          -a  \\
                0 &      a &           b \\
              -a  &       b&           0
    \end{pmatrix},
\end{equation*}
and 
\begin{equation*} 
  \widetilde{\mathcal{K}}_f(\theta_1,\theta_2) =\begin{pmatrix}
       2\beta \hat{a}&       0 &          -a  \\
                0 &       \hat{a} &           b \\
              -a  &       b&           0
    \end{pmatrix}.
\end{equation*}
Then,
\begin{equation*}
  \widetilde{\mathcal{L}}-  \lambda \widetilde{\mathcal{K}}_f
   = \begin{pmatrix}
   2\beta (a-\lambda\hat{a})&              0                     &          -a(1-\lambda)  \\
                0 &                      a-\lambda \hat{a}         &           b(1-\lambda) \\
 -a(1-\lambda)  &                   b(1-\lambda)                            &           0
    \end{pmatrix}.
\end{equation*}
The determinant of $\widetilde{\mathcal{L}}-  \lambda \widetilde{\mathcal{K}}_f$ is 

\begin{align*}
 |\widetilde{\mathcal{L}}-  \lambda \widetilde{\mathcal{K}}_f | &= -2\beta (a-\lambda\hat{a}) b^2(1-\lambda)^2-a^2(1-\lambda)^2(a-\lambda \hat{a})\\
  & =(1-\lambda)^2 (a-\lambda \hat{a})(-2\beta b^2-a^2).
\end{align*}
It follows that the eigenvalues of   $\widetilde{\mathcal{K}}^{-1}_f\widetilde{\mathcal{L}}$ are $1,1$ and $\frac{a}{\hat{a}}$.

We restate the results for  solving the Laplace problem using  mass matrix approximation \cite{CH2021addVanka} in the following.

\begin{lemma} \cite{CH2021addVanka}\label{lem:ratio-a-hat-a}
For $\boldsymbol{\theta} \in T^{high}$, $\frac{a}{\hat{a}}(\boldsymbol{\theta}) \in [8/9, 16/9]$.
\end{lemma}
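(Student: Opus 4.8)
The plan is to reduce the claim to an elementary bound on a product of three nonnegative factors. Writing $c_i=\cos\theta_i$, I would first use the tensor-product structure of the mass stencil, $\widetilde{M}=\widetilde{M}_1(\theta_1)\widetilde{M}_1(\theta_2)$, to record the factorization $a=\widetilde{M}=\frac{h^2}{9}(2+c_1)(2+c_2)$, and combine it with $\widetilde{A}_{fd}=(4-2c_1-2c_2)/h^2$ from \eqref{eq:scalar-symbol-Laplace}. By \eqref{eq:AM-MA-symbol} this gives the closed form
\begin{equation*}
\frac{a}{\hat{a}}=\widetilde{M}\,\widetilde{A}_{fd}=\frac{2}{9}(2+c_1)(2+c_2)(2-c_1-c_2),
\end{equation*}
so it suffices to show that $g(c_1,c_2):=(2+c_1)(2+c_2)(2-c_1-c_2)\in[4,8]$ on the relevant set. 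The next step is to translate the frequency restriction: for $\theta_i\in[-\tfrac{\pi}{2},\tfrac{3\pi}{2})$ one has $c_i\in[-1,1]$, while the high-frequency condition (at least one $\theta_i\in[\tfrac{\pi}{2},\tfrac{3\pi}{2})$) is equivalent to $\min(c_1,c_2)\le 0$. Hence I must bound $g$ over $D:=\{(c_1,c_2)\in[-1,1]^2:\ c_1\le0\ \text{or}\ c_2\le0\}$.

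For the upper bound I would set $u=2+c_1$, $v=2+c_2$, $w=2-c_1-c_2$; these are nonnegative on $[-1,1]^2$ and satisfy $u+v+w=6$. By the AM--GM inequality, $g=uvw\le\big(\tfrac{u+v+w}{3}\big)^3=8$, with equality exactly when $u=v=w=2$, i.e.\ $c_1=c_2=0$. Since $(0,0)\in D$ (attained e.g.\ at $\theta_1=\theta_2=\tfrac{\pi}{2}$), the bound $g\le8$ is sharp and yields $\frac{a}{\hat{a}}\le\frac{16}{9}$; note this half needs no restriction on the frequencies.

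For the lower bound, which genuinely uses the high-frequency restriction, I would exploit the symmetry of $g$ to assume $c_1\le0$. Fixing $c_1$ and viewing $g$ as a function of $c_2$, the factor $(2+c_2)(2-c_1-c_2)$ is a downward parabola in $c_2$, so its minimum over $c_2\in[-1,1]$ is attained at an endpoint; a short comparison shows $c_2=-1$ is the smaller choice, reducing the problem to minimizing the one-variable function $(2+c_1)(3-c_1)=6+c_1-c_1^2$ over $c_1\in[-1,0]$. This is increasing there, so its minimum is $4$, attained at $c_1=-1$; thus $g\ge4$ on $D$, with equality at $(c_1,c_2)=(-1,-1)$ (i.e.\ $\theta_1=\theta_2=\pi$), giving $\frac{a}{\hat{a}}\ge\frac{8}{9}$. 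The main thing to get right is this second reduction: the only real care is in confirming that $T^{\rm high}$ maps exactly onto $D$ and that the parabola's minimum sits at $c_2=-1$ for every admissible $c_1$, since it is precisely the constraint $c_1\le0$ (one cosine nonpositive) that prevents $g$ from dipping below $4$.
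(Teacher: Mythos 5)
Your argument is correct and complete. Note that the paper does not prove this lemma at all: it is imported verbatim from the cited reference \cite{CH2021addVanka}, so there is no in-paper proof to compare against. Your reduction is exactly the natural one: the factorization $a=\frac{h^2}{9}(2+c_1)(2+c_2)$ and $\widetilde{A}_{fd}=\frac{2}{h^2}(2-c_1-c_2)$ give $\frac{a}{\hat a}=\frac{2}{9}(2+c_1)(2+c_2)(2-c_1-c_2)$, the AM--GM step (with $u+v+w=6$) cleanly gives the upper bound $8$, and the concavity-in-$c_2$ endpoint argument followed by monotonicity of $6+c_1-c_1^2$ on $[-1,0]$ gives the lower bound $4$; both extremes are attained at high frequencies ($\theta_1=\theta_2=\pi/2$ and $\theta_1=\theta_2=\pi$). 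One small imprecision: the high-frequency condition is not literally \emph{equivalent} to $\min(c_1,c_2)\le 0$ (e.g.\ $\boldsymbol{\theta}=(-\pi/2,0)$ lies in $T^{\rm low}$ yet has $c_1=0$), but only the inclusion of the image of $T^{\rm high}$ in your set $D$ is needed for the stated bounds, so the proof is unaffected.
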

%============================================================================
\begin{lemma} \cite{CH2021addVanka}\label{lemma:mass-smoothing-factor}
If we consider the mass matrix $M$ to approximate the inverse of the scalar  Laplacian, $A_{fd}$, and  $\mathcal{S}=I-\omega MA_{fd}$, then  the optimal smoothing factor for $\mathcal{S}$  is
\begin{equation*} 
  \mu_{\rm opt}= \min_{\omega} \max_{\boldsymbol{\theta}\in T^{{
  \rm high}}} \left|1- \omega \widetilde{ M} \widetilde{A}_{fd}\right|=\frac{1}{3},
\end{equation*}
where the minimum is uniquely achieved at $\omega=\omega_{\rm opt} = \frac{3}{4}$. 
\end{lemma}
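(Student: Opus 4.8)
The plan is to reduce the two-dimensional minimax over Fourier frequencies to a one-dimensional minimax over a real interval, and then solve the latter exactly. The key observation is that the two scalar symbols commute and multiply to a single real number: by \eqref{eq:AM-MA-symbol} we have $\widetilde{M}\widetilde{A}_{fd} = a/\hat{a}$. Writing $t = t(\boldsymbol{\theta}) = \tfrac{a}{\hat a}(\boldsymbol{\theta})$, the smoothing quantity becomes
\begin{equation*}
\mu_{\rm loc}(\omega) = \max_{\boldsymbol{\theta}\in T^{\rm high}} \left| 1 - \omega\, t(\boldsymbol{\theta})\right|,
\end{equation*}
so the whole problem depends on $\boldsymbol{\theta}$ only through the scalar $t$. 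By Lemma \ref{lem:ratio-a-hat-a}, $t(\boldsymbol{\theta}) \in [8/9,\,16/9]$ for $\boldsymbol{\theta}\in T^{\rm high}$. I would additionally check that both endpoints are actually attained, for instance $t=16/9$ at $\boldsymbol{\theta}=(\pi/2,\pi/2)$ and $t=8/9$ at $\boldsymbol{\theta}=(\pi,\pi)$ using the explicit formulas for $a$ and $\widetilde{A}_{fd}$, so that the range of $t$ over $T^{\rm high}$ is exactly the closed interval $[8/9,\,16/9]$.

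Next I would solve the scalar minimax $\min_{\omega}\max_{t\in[8/9,\,16/9]}|1-\omega t|$. For each fixed $\omega$ the map $t \mapsto |1-\omega t|$ is convex, hence its maximum over an interval is attained at an endpoint; therefore
\begin{equation*}
\max_{t\in[8/9,\,16/9]}|1-\omega t| = \max\left\{ \left|1-\tfrac{8}{9}\omega\right|,\ \left|1-\tfrac{16}{9}\omega\right| \right\}.
\end{equation*}
This is a convex, piecewise-linear function of $\omega$ whose minimizer is found by balancing the two competing branches. Solving $1 - \tfrac{8}{9}\omega = \tfrac{16}{9}\omega - 1$ gives $\omega = \tfrac{2}{8/9+16/9} = \tfrac{3}{4}$, and substituting back yields the common value $\tfrac{1}{3}$. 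Uniqueness follows because at $\omega=\tfrac34$ the active left branch has slope $-\tfrac{8}{9}<0$ while the active right branch has slope $+\tfrac{16}{9}>0$, so the graph has a genuine V-shaped kink there, and a strictly convex (kinked) function has a unique minimizer; negative or large $\omega$ only increases the value, so $\omega=\tfrac34$ is the global minimizer.

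The routine parts are the endpoint computations and the elementary balancing, which I would present only briefly. Since the symbol $\widetilde{\mathcal{S}}(\boldsymbol{\theta})=1-\omega\,\widetilde{M}\,\widetilde{A}_{fd}$ is already scalar, there is no eigenvalue computation to perform here, and the entire difficulty is concentrated in understanding the range of $a/\hat{a}$ over $T^{\rm high}$. That range is exactly what Lemma \ref{lem:ratio-a-hat-a} supplies, so the only genuinely delicate point I must add is the verification that the endpoints $8/9$ and $16/9$ are \emph{attained}: if the true range were a strict subinterval, the maximum over it could be smaller and the optimal factor would drop below $\tfrac13$. Pinning down attainment of both endpoints is therefore the step I would treat most carefully, as it is what fixes $\mu_{\rm opt}$ at precisely $\tfrac13$.
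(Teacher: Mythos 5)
Your proof is correct and takes essentially the same approach as the paper: the lemma is stated there without proof (it is quoted from \cite{CH2021addVanka}), but the identical argument --- the range $[8/9,\,16/9]$ of $a/\hat a$ from Lemma \ref{lem:ratio-a-hat-a} followed by balancing $\left|1-\tfrac{8}{9}\omega\right|$ against $\left|1-\tfrac{16}{9}\omega\right|$ to obtain $\omega=\tfrac34$ and the common value $\tfrac13$ --- is exactly the argument the paper uses in proving Theorem \ref{thm:EBSR-smoothing-theorem}. Your explicit verification that both endpoints are attained (at $(\pi,\pi)$ and $(\pi/2,\pi/2)$) is a detail the paper leaves implicit, and it is indeed what pins the optimal factor at exactly $\tfrac13$.
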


Since $\widetilde{ M}\widetilde{A}_{fd}=\widetilde{A}_{fd}\widetilde{ M}=\frac{a}{\hat{a}}$, see \eqref{eq:AM-MA-symbol}, we have the following result.
\begin{corollary}\label{corollary:laplace-approximate-mass}
If we consider the discrete Laplacian $A_{fd}$ to approximate the inverse of the mass matrix, $M$, and  $\mathcal{S}_{fd}=I-\omega A_{fd} M$, then  the optimal smoothing factor for $\mathcal{S}_{fd}$  is
\begin{equation*} 
  \mu_{\rm opt}= \min_{\omega} \max_{\boldsymbol{\theta}\in T^{{
  \rm high}}}  \left|1- \omega \widetilde{A}_{fd}\widetilde{ M}\right| =\frac{1}{3},
\end{equation*}
provided that $\omega=\omega_{\rm opt} = \frac{3}{4}$. 
\end{corollary}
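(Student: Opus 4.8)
The plan is to deduce this corollary directly from Lemma~\ref{lemma:mass-smoothing-factor} by exploiting the fact that $A_{fd}$ and $M$ are both \emph{scalar} operators, so that their symbols are scalar-valued functions of $\boldsymbol{\theta}$ and therefore commute. First I would observe that, since $A_{fd}$ and $M$ act on scalar grid functions in $l^2(\mathbf{G}_h)$, each of $\widetilde{A}_{fd}(\boldsymbol{\theta})$ and $\widetilde{M}(\boldsymbol{\theta})$ is a single real number for each frequency $\boldsymbol{\theta}$. Consequently the symbol of the error-propagation operator $\mathcal{S}_{fd}=I-\omega A_{fd}M$ is the scalar $1-\omega\,\widetilde{A}_{fd}\widetilde{M}$.

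Next I would invoke the commutativity identity \eqref{eq:AM-MA-symbol}, which records that $\widetilde{A}_{fd}\widetilde{M}=\widetilde{M}\widetilde{A}_{fd}=a/\hat{a}$. Hence, for every $\boldsymbol{\theta}$ and every $\omega$, the symbol of $\mathcal{S}_{fd}$ coincides \emph{pointwise} with the symbol $1-\omega\,\widetilde{M}\widetilde{A}_{fd}$ of the operator $\mathcal{S}=I-\omega M A_{fd}$ analyzed in Lemma~\ref{lemma:mass-smoothing-factor}. Since the two symbols agree identically, taking the maximum over $T^{\rm high}$ and then minimizing over $\omega$ produces literally the same extremal problem, whose value is $\tfrac{1}{3}$ and whose minimizer is $\omega=\tfrac{3}{4}$, as asserted.

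For completeness I would also note the self-contained route that bypasses the lemma: substituting $t=a/\hat{a}$ and invoking Lemma~\ref{lem:ratio-a-hat-a}, which guarantees $t\in[8/9,16/9]$ on $T^{\rm high}$, reduces the task to $\min_{\omega}\max_{t\in[8/9,16/9]}|1-\omega t|$. The maximum over a fixed interval is attained at an endpoint, and equalizing the two endpoint values via $|1-\omega\cdot 8/9|=|1-\omega\cdot 16/9|$ gives $\omega=\tfrac{2}{8/9+16/9}=\tfrac{3}{4}$ with common value $\tfrac{1}{3}$.

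I do not anticipate a genuine obstacle here, as the statement is an immediate corollary; the only point requiring care is justifying that the order of the factors $A_{fd}M$ versus $MA_{fd}$ is irrelevant, which is exactly what \eqref{eq:AM-MA-symbol} provides. A secondary remark worth stating explicitly is that the uniqueness of the optimal parameter transfers as well: because the symbols are pointwise equal, the two optimization problems are identical, not merely equal in value, so the minimizer $\omega=\tfrac{3}{4}$ remains unique.
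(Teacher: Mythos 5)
Your proposal is correct and matches the paper's own reasoning, which likewise deduces the corollary from Lemma~\ref{lemma:mass-smoothing-factor} via the symbol identity \eqref{eq:AM-MA-symbol} showing $\widetilde{A}_{fd}\widetilde{M}=\widetilde{M}\widetilde{A}_{fd}=a/\hat{a}$. Your supplementary endpoint computation using Lemma~\ref{lem:ratio-a-hat-a} is a harmless (and correct) elaboration of the same argument rather than a different route.
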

%===================================================
Since  $\widetilde{\mathcal{K}}^{-1}_f\widetilde{\mathcal{L}}$ contains eigenvalue $\frac{a}{\hat{a}}$, which is an eigenvalue of $\widetilde{A}_{fd}\widetilde{ M}$, Corollary \ref{corollary:laplace-approximate-mass}  indicates that  $\frac{1}{3}$ is a lower bound on the optimal smoothing factor for the  stiffness-based  BSR.  
%=================================================
\begin{theorem}\label{thm:EBSR-smoothing-theorem}
The optimal smoothing factor for  the stiffness-based BSR for the optimal control system \eqref{eq:saddle-structure} is
  \begin{equation*}
   \mu_{{\rm opt},B}=\displaystyle \min_{\omega_{B}}\max_{ \boldsymbol{\theta}\in T^{{\rm high}}} \left|\lambda( \widetilde{\mathcal{S}}_{B}( \omega_{B},\boldsymbol{\theta}))\right|=\frac{1}{3},
\end{equation*}
where the minimum is attained provided that $\omega_{B}=\omega_{B,\rm opt}=\frac{3}{4}$.
\end{theorem}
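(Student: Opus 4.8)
The plan is to reduce the whole $3\times 3$ smoothing analysis to the scalar result already established in Corollary \ref{corollary:laplace-approximate-mass}. First I would form the error-propagation symbol $\widetilde{\mathcal{S}}_B(\omega_B,\boldsymbol\theta)=I-\omega_B\widetilde{\mathcal{K}}_f^{-1}\widetilde{\mathcal{L}}$ and use the determinant computation of $\widetilde{\mathcal{L}}-\lambda\widetilde{\mathcal{K}}_f$ performed just above the statement, which shows that the eigenvalues of $\widetilde{\mathcal{K}}_f^{-1}\widetilde{\mathcal{L}}$ are $1$, $1$, and $\frac{a}{\hat a}$. Consequently the eigenvalues of $\widetilde{\mathcal{S}}_B$ are $1-\omega_B$ (with multiplicity two) and $1-\omega_B\frac{a}{\hat a}$, so that
\begin{equation*}
\left|\lambda\big(\widetilde{\mathcal{S}}_B(\omega_B,\boldsymbol\theta)\big)\right|\in\left\{\,|1-\omega_B|,\ \left|1-\omega_B\tfrac{a}{\hat a}\right|\,\right\}.
\end{equation*}
This is the key reduction: the only $\boldsymbol\theta$-dependent eigenvalue is exactly the scalar factor $1-\omega_B\widetilde{A}_{fd}\widetilde{M}$ studied in the Corollary, while the remaining eigenvalue is constant in $\boldsymbol\theta$.

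Next I would establish the two matching bounds. For the lower bound, since $\frac{a}{\hat a}=\widetilde{A}_{fd}\widetilde{M}$ by \eqref{eq:AM-MA-symbol}, the $\boldsymbol\theta$-dependent eigenvalue contributes $\max_{\boldsymbol\theta\in T^{\rm high}}|1-\omega_B\frac{a}{\hat a}|$, whose minimum over $\omega_B$ equals $\frac13$ by Corollary \ref{corollary:laplace-approximate-mass}. Hence $\mu_{\rm loc}(\omega_B)\ge\max_{\boldsymbol\theta\in T^{\rm high}}|1-\omega_B\frac{a}{\hat a}|\ge\frac13$ for every admissible $\omega_B$. For the matching upper bound I would simply evaluate at $\omega_B=\frac34$: the Corollary gives $\max_{\boldsymbol\theta\in T^{\rm high}}|1-\frac34\frac{a}{\hat a}|=\frac13$, while the constant eigenvalue gives $|1-\frac34|=\frac14<\frac13$. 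Taking the maximum of the two yields $\mu_{\rm loc}(\tfrac34)=\frac13$, so $\mu_{{\rm opt},B}\le\frac13$. Combining the two bounds gives $\mu_{{\rm opt},B}=\frac13$, attained at $\omega_B=\frac34$.

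I do not anticipate a serious obstacle, since the eigenvalue factorization does the real work and the one-dimensional optimization is inherited verbatim from the scalar case. The only point requiring a moment of care is confirming that the extra, $\boldsymbol\theta$-independent eigenvalue $1-\omega_B$ never dominates at the optimizer; this is immediate because $\frac14<\frac13$. Uniqueness of the minimizer also follows for free: Lemma \ref{lemma:mass-smoothing-factor} and Corollary \ref{corollary:laplace-approximate-mass} attain their scalar minimum $\frac13$ uniquely at $\omega=\frac34$, so for any $\omega_B\neq\frac34$ the $\frac{a}{\hat a}$ term already exceeds $\frac13$, forcing $\mu_{\rm loc}(\omega_B)>\frac13$.
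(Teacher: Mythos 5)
Your proposal is correct and follows essentially the same route as the paper: both reduce the $3\times 3$ symbol to its eigenvalues $1-\omega_B$ (twice) and $1-\omega_B\frac{a}{\hat a}$, invoke Corollary \ref{corollary:laplace-approximate-mass} (equivalently Lemma \ref{lem:ratio-a-hat-a}) to get the value $\frac13$ at $\omega_B=\frac34$, and check that the constant eigenvalue contributes only $\frac14<\frac13$. Your explicit lower-bound/upper-bound organization and the uniqueness remark are slight refinements of presentation, not a different argument.
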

\begin{proof}
 Since the eigenvalues of $\widetilde{\mathcal{S}}_{B}=I-\omega_B\widetilde{\mathcal{K}}^{-1}_f\widetilde{\mathcal{L}}$ are $1-\omega_B, 1-\omega_B$ and $1-\omega_B\frac{a}{\hat{a}}$ and Lemma \ref{lem:ratio-a-hat-a}, the smoothing factor for $\mathcal{\widetilde{S}}_{B}$ is
 \begin{equation*}
\mu_{{\rm loc}} = \max \left\{|1-\omega_B|, \left|1-\frac{16}{9}\omega_B\right|, \left|1-\frac{8}{9}\omega_B\right|\right \},
\end{equation*}
From Corollary \ref{corollary:laplace-approximate-mass}, we know that 
 \begin{equation*}
\min_{\omega_B}\max \left\{\left |1-\frac{16}{9}\omega_B\right|, \left|1-\frac{8}{9}\omega_B\right|\right \}=\frac{1}{3},
\end{equation*}
provided that $\omega_B=\omega_{B,\rm opt}=\frac{3}{4}$.  Furthermore,    $|1-\omega_{B,\rm opt}|=\frac{1}{4}<\frac{1}{3}$.   Thus,  $\min_{\omega_B}\mu_{{\rm loc}} =\frac{1}{3}$.
\end{proof}
%====================================================
 \begin{remark}
 We point out that one can consider $\alpha$ as a variable in \eqref{eq:Precondtion-K1} rather than $\alpha=1$. However,  this does not change the optimal smoothing factor  for exact BSR obtained in Theorem \ref{thm:EBSR-smoothing-theorem}, and only changes the choice of optimal parameter $\omega_B$. In contrast to exact BSR,  varying $\alpha$  is needed for inexact version of BSR, which will be discussed in Section \ref{sec:Numer}. 
 \end{remark}
 
 {\bf Inexact Braess-Sarazin  relaxation}:  Exact  Braess-Sarazin algorithm requires an exact inversion of the Schur complement \eqref{eq:solution-of-precondtion-s1}, which is very expensive.  In the literature, it has been shown that   inexact  solve  using a few sweeps of weighted Jacobi iteration for the Schur complement system is enough to maintain the same convergence of exact BSR for the Stokes equations, see, for example \cite{YH2021massStokes}.   In \eqref{eq:solution-of-precondtion-s1}, the Schur complement matrix is
 \begin{align*}
 BC^{-1} B^T &= \begin{pmatrix}
-M & K
\end{pmatrix}
\begin{pmatrix}
\frac{1}{2\beta}A_{fd} &  0\\
0 &  A_{fd}
\end{pmatrix}
 \begin{pmatrix}
-M^T \\   K^T
\end{pmatrix} \\
& = \frac{1}{2\beta} MA_{fd}M^T+KA_{fd}K^T.
\end{align*}
We explore  inexact stiffness-based BSR, using either a few sweeps of weighted Jacobi iteration to approximate the solution of \eqref{eq:solution-of-precondtion-s1} or a $V$-cycles multigrid on the Schur complement system. We are wondering whether the inexact version is possible to achieve the same smoothing factor of $\frac{1}{3}$. Here, we do not  seek theoretical analysis. Instead, we numerically examine  the performance of  inexact BSR in Section \ref{sec:Numer}.   

Note that another choice for $\mathcal{K}$ is   
\begin{equation}\label{eq:KJ-form}
\mathcal{K}=\mathcal{K}_D=\begin{pmatrix}
C_D & B^T\\
B  &  0
\end{pmatrix},
\end{equation}
where  
\begin{equation*}
C_D=\begin{pmatrix}
2\beta {\rm diag}(M)  & 0\\
0  & {\rm diag}(M) 
\end{pmatrix}.
\end{equation*} 
From \eqref{eq:mass-2D-stencil}, it can be seen that the symbol of ${\rm diag}(M)$ is $\frac{4h^2}{9}=:a_D$.
Here, we are curious about the optimal smoothing factor for the smoother  \eqref{eq:KJ-form}. As a comparison, we derive the optimal smoothing factor for this choice.

\begin{theorem}\label{thm:Jacobi-BSR-smoothing}
If we consider $\mathcal{K}_D$ defined in \eqref{eq:KJ-form} and $\mathcal{S}_D=I-\omega_D \mathcal{K}^{-1}_D\mathcal{L}$,  them  the optimal smoothing factor for $\mathcal{S}_D$ is 
 \begin{equation*}
   \mu_{{\rm opt},D}=\min_{\omega_{D}}\max_{ \boldsymbol{\theta}\in T^{{\rm high}}} \left|\lambda(\widetilde{\mathcal{S}}_{D}( \omega_{D},\boldsymbol{\theta}))\right|=\frac{5}{7},
\end{equation*}
provided that $\omega_D =\frac{8}{7}$.
\end{theorem}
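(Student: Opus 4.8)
The plan is to mirror the exact BSR argument, replacing the symbol $\hat a$ of $A_{fd}^{-1}$ by the constant symbol $a_D=\tfrac{4h^2}{9}$ of $\mathrm{diag}(M)$, so that the analysis stays entirely at the level of $3\times3$ symbols. First I would write down
\[
\widetilde{\mathcal{K}}_D(\theta_1,\theta_2)=\begin{pmatrix} 2\beta a_D & 0 & -a\\ 0 & a_D & b\\ -a & b & 0\end{pmatrix},
\]
form $\widetilde{\mathcal{L}}-\lambda\widetilde{\mathcal{K}}_D$, and note that its $(1,1)$ and $(2,2)$ entries become $2\beta(a-\lambda a_D)$ and $a-\lambda a_D$, while the off-diagonal blocks again factor through $(1-\lambda)$ exactly as in the derivation preceding Theorem~\ref{thm:EBSR-smoothing-theorem}. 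Expanding the determinant along the first row gives $|\widetilde{\mathcal{L}}-\lambda\widetilde{\mathcal{K}}_D|=-(1-\lambda)^2(a-\lambda a_D)(2\beta b^2+a^2)$, so the eigenvalues of $\widetilde{\mathcal{K}}_D^{-1}\widetilde{\mathcal{L}}$ are $1,1$ and $a/a_D$, and hence the eigenvalues of $\widetilde{\mathcal{S}}_D=I-\omega_D\widetilde{\mathcal{K}}_D^{-1}\widetilde{\mathcal{L}}$ are $1-\omega_D$ (twice) and $1-\omega_D\,a/a_D$.

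The essential new ingredient, which plays the role of Lemma~\ref{lem:ratio-a-hat-a} here, is the range of $a/a_D$ over $T^{\rm high}$. Using the factorization $\widetilde M=\tfrac{h^2}{9}(2+\cos\theta_1)(2+\cos\theta_2)$, I would write $\tfrac{a}{a_D}=\tfrac14(2+\cos\theta_1)(2+\cos\theta_2)$. On a low-frequency coordinate $2+\cos\theta_i\in[2,3]$, whereas on a high-frequency coordinate $2+\cos\theta_i\in[1,2]$; since $\boldsymbol\theta\in T^{\rm high}$ forces at least one high coordinate, the separable product is minimized at $(\theta_1,\theta_2)=(\pi,\pi)$ (value $1/4$) and maximized at a low/high boundary point such as $(\theta_1,\theta_2)=(0,\pi/2)$ (value $3/2$). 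A short continuity argument then yields $a/a_D\in[1/4,3/2]$ on $T^{\rm high}$.

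Finally, because $1-\omega_D$ is constant and $a/a_D$ sweeps $[1/4,3/2]$ for $\omega_D>0$, the smoothing factor reduces to
\[
\mu_{\rm loc}(\omega_D)=\max\Big\{|1-\omega_D|,\ \big|1-\tfrac14\omega_D\big|,\ \big|1-\tfrac32\omega_D\big|\Big\}.
\]
Minimizing this is a one-variable equioscillation problem: the two extreme terms balance when $|1-\tfrac14\omega_D|=|1-\tfrac32\omega_D|$, i.e. $\tfrac74\omega_D=2$, giving $\omega_D=\tfrac87$ with common value $\tfrac57$; I would then verify the middle term is subdominant, $|1-\tfrac87|=\tfrac17<\tfrac57$. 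Convexity of the maximum of the two outer V-shaped functions guarantees this crossing is the global minimizer, yielding $\mu_{{\rm opt},D}=\tfrac57$.

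The main obstacle I anticipate is the second step, pinning down the exact extrema of $a/a_D$ on the half-open high-frequency region: one must check that the candidate corners ($\theta_i\in\{0,\pi/2,\pi\}$) actually lie in $T^{\rm high}$ and that no interior critical point of the separable product beats the boundary values. Everything else is a routine repeat of the determinant computation and the standard min–max balancing already carried out for the exact scheme.
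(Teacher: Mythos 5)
Your proposal is correct and follows essentially the same route as the paper's proof: identify the eigenvalues of $\widetilde{\mathcal{K}}_D^{-1}\widetilde{\mathcal{L}}$ as $1,1,a/a_D$, show $a/a_D\in[1/4,3/2]$ on $T^{\rm high}$, and solve the resulting one-parameter equioscillation problem to get $\omega_D=8/7$ and $\mu_{{\rm opt},D}=5/7$. The only difference is that you supply details the paper leaves as ``easily shown''---the determinant expansion and the factorization $a/a_D=\tfrac14(2+\cos\theta_1)(2+\cos\theta_2)$ used to pin down the extrema---both of which check out.
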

%============================================================================================

\begin{proof}
It can easily be shown that the eigenvalues of $\widetilde{\mathcal{S}}_D$ are $1,1$ and $\frac{a}{a_D}$. Now, we consider the optimal smoothing factor for the modes, $\frac{a}{a_D}$. Note that

\begin{equation*}
\frac{a}{a_D} = \frac{1}{4}(4+2\cos\theta_1+2\cos\theta_2+\cos\theta_1\cos\theta_2).
\end{equation*}
For $\boldsymbol{\theta}\in T^{\rm high}$, it can easily be shown that the extreme values of $\frac{a}{a_D}$ are $\frac{1}{4}$ and $\frac{3}{2}$. Thus, 

 \begin{equation*}
\mu_{\rm opt}=\min_{\omega_D}\max_{ \boldsymbol{\theta}\in T^{{\rm high}}} \left\{  |1- \omega_D|,\left |1-\frac{1 }{4}\omega_D\right|, \left|1-\frac{3}{2}\omega_D\right|\right \}=\frac{5}{7},
\end{equation*}
provided that $\omega_D=\frac{2}{1/4+3/2}=\frac{8}{7}$.
\end{proof}

From Theorems \ref{thm:EBSR-smoothing-theorem} and   \ref{thm:Jacobi-BSR-smoothing}, we see that the stiffness-based BSR offers a highly satisfactory convergence speed.  Thus, in our numerical tests, we only consider the stiffness-based BSR.

%=============================================================================================
\section{Numerical experiments}\label{sec:Numer}
In this section, we  first present the LFA two-grid convergence to compare with the optimal LFA smoothing factor derived in Subsection \ref{subsec:BSR-analytical-result} for the exact stiffness-based BSR.  Second,  we display actual multigrid performance to validate our analytical result for exact BSR. To consider practical use, we  develop an inexact version of BSR, where a $V$-cycle multigrid is applied to the Schur complement system to maintain the performance observed for exact BSR.  To demonstrate the robustness of our multigrid methods with respect to the regularization parameters, we consider different values of $\beta$ in our tests.

\subsection{LFA predictions}

In this subsection, we report the LFA predicted two-grid convergence factor, $\rho_h$, for different values of regularization parameter $\beta$ and grid size $h$ for exact stiffness-based BSR.   For the prolongation of correction,  we consider the standard bilinear  interpolation and apply the corresponding adjoint operator for the restriction.  Galerkin operator is used to define the coarse grid operator. The number of smoothing steps $\nu$ is   $\nu = \nu_1+\nu_2$. We use $32 \times 32$ evenly distributed Fourier frequencies in the Fourier domain $[-\frac{\pi}{2}+\tau, \frac{\pi}{2}-\tau]^2$ with $\tau=\frac{\pi}{64}$ to compute LFA two-grid convergence factor  $\rho_h$ in \eqref{eq:LFA-rho-form}.

In Table \ref{tab:LFA-beta-rho}, we show the LFA predicted two-grid convergence factor as a function of $\beta$ and $h$ with $\nu=1$. We see that the LFA  two-grid convergence factor is the same as the optimal smoothing factor derived from our theoretical analysis. Moreover,  the convergence factor is robust in  regularization parameter $\beta$ and is $h$-independent.

 \begin{table}[H]
 \caption{LFA predicted two-grid convergence factor, $\rho_h$,  as a  function of $\beta$ and $h$ with $\nu=1$ for exact stiffness-based BSR.}
\centering
\begin{tabular}{ l    c c c c}
\hline
$h$       & $\beta=10^{-2}$  &$\beta=10^{-4}$    &$\beta=10^{-6}$   & $\beta=10^{-8}$  \\
\hline
 $1/64$                    &0.333        &0.333       &0.333      &0.333   \\
 $1/128$                  &0.333        &0.333       &0.333      &0.333  \\
 $1/256$                   &0.333        &0.333       &0.333      &0.333    \\
\hline
\end{tabular}\label{tab:LFA-beta-rho}
\end{table}
%==============================================================================
In Table \ref{tab:rho-nu}, we report the LFA predicted two-grid convergence factor as a function of the number of smoothing steps $\nu$  and $h$ with $\beta=10^{-2}$. We see that $\rho_h=\mu^{\nu}_{\rm opt}$ for different $h$.  For other $\beta$, we see same behaviour,  but omit the results. 
 \begin{table}[H]
 \caption{LFA predicted two-grid convergence factor, $\rho_h$,  as a  function of   $\nu$  and $h$  with $\beta=10^{-2}$ for exact stiffness-based BSR.}
\centering
\begin{tabular}{ l  c c c }
\hline
$h$                   & $\nu =2$    & $\nu=3$          &$\nu=4$      \\
\hline
$1/64$               &0.111         &0.037               &0.012         \\
$1/128$             &0.111         &0.037               &0.012         \\
$1/256$             &0.111         &0.037               &0.012        \\
\hline
\end{tabular}\label{tab:rho-nu}
\end{table}

%==================================================================================================
\subsection{Multigrid performance}
In this subsection, we report some numerical results to confirm our theoretical results. We consider the distributed control problem on the unit square domain with homogeneous Dirichlet boundary conditions and $\hat{u}=0$ such that the discrete solution is  $z_h=0$.   The coarsest grid is a $4\times 4$ mesh.   For the stiffness matrix $A_{fd}$, we consider rediscretization operator on the coarse meshgrid.   We measure the multigrid convergence factor by the following form \cite{MR1807961}
\begin{equation*}
\hat{\rho}_{h} = \sqrt[n]{\frac{||d_h^{(n)}||_2}{||d_h^{(0)}||_2}}, 
\end{equation*}
where $d_h^{(n)}=b_h-\mathcal{L}_h z_h^{(n)}$, and $z_h^{(n)}$ is the approximation to the solution of \eqref{eq:saddle-structure}  at the
$n^{\rm th}$ multigrid iteration. We display the convergence factors obtained after 100 multigrid
cycles for different values of  $\nu$.   The initial guess
$z_h^{(0)}$ is chosen randomly.   We use the MATLAB code of Rees \cite{Rees2010github} to
generate the linear systems.

Table \ref{tab:BSR-dirichlet-beta4} shows the measured $W$-cycles multigrid convergence factors for exact  stiffness-based BSR  with algorithmic parameters,  $ \alpha_B=1, \omega_B=\frac{3}{4}$, for different values of $\beta$ and grid size $h$.  We see that our numerical results are independent of grid size $h$, and   $\hat{\rho}_h$ for $\nu>1$  matches well with the LFA two-grid convergence factor  $\rho_h$. Although for $\nu>1$ the measured convergence factors are  slightly better than the predictions, it is not surprising since LFA two-grid prediction is the same as the actual convergence for problems with periodic boundary conditions, and here we consider Dirichlet boundary conditions.  
 
%=======================================================================

 \begin{table}[H]
 \caption{ $W$-cycles measured multigrid convergence factors, $\hat{\rho}_h$, for exact stiffness-based BSR with algorithmic parameters, $ \alpha_B=1, \omega_B=\frac{3}{4}$, compared with LFA two-grid predictions, $\rho_h$, for exact BSR with $ \alpha_B=1, \omega_B=\frac{3}{4}$.}
\centering
\begin{tabular}{ l  c c c c }
\hline
$\nu$  & 1    & 2   &3    & 4  \\
\hline
$\rho_h$            &0.333      &0.111         &0.037               &0.012       \\
\hline
\hline
    \multicolumn{5} {c}  {$\beta=10^{-2}$ } \\
\hline
$\hat{\rho}_{h=1/64}$              &0.279       &0.093         &0.031       &0.010         \\
$\hat{\rho}_{h=1/128}$            &0.275        &0.091         &0.030       &0.010           \\
$\hat{\rho}_{ h=1/256}$           &0.272       &0.090         & 0.030      & 0.010         \\
\hline
\hline
    \multicolumn{5} {c}  { $\beta=10^{-4}$ } \\
\hline 
$\hat{\rho}_{h=1/64}$              &0.268       &0.090         &0.030       &0.010          \\       
$\hat{\rho}_{h=1/128}$            &0.264       &0.089          &0.029        &0.010          \\   
 $\hat{\rho}_{h=1/256}$            &0.261       &0.088         &0.029       &0.010          \\   
\hline
\hline
  \multicolumn{5} {c}  { $\beta=10^{-6}$ } \\
\hline
$\hat{\rho}_{h=1/64}$              &0.289        & 0.094        &0.031       & 0.012        \\
$\hat{\rho}_{h=1/128}$            &0.276        &0.093         &0.030       &0.010          \\
$\hat{\rho}_{ h=1/256}$           &0.272       &0.092         &0.030       & 0.010         \\
\hline
\hline
  \multicolumn{5} {c}  { $\beta=10^{-8}$ } \\
\hline
$\hat{\rho}_{h=1/64}$              &0.302       & 0.098        &0.032       &0.011          \\
$\hat{\rho}_{h=1/128}$            &0.299       &0.098        &0.032       & 0.012         \\
$\hat{\rho}_{ h=1/256}$           &0.290       &0.096         &0.030       &0.012          \\
\hline
\end{tabular}\label{tab:BSR-dirichlet-beta4}
\end{table}
%====================================================================

As mentioned before, using exact BSR, we need to solve the Schur complement system \eqref{eq:solution-of-precondtion-s1} exactly. In practice, an inexact solve is preferred. 
First, we tried two or three sweeps of weighted Jacobi iteration and Gauss-Seidel iteration to approximate the solution of  \eqref{eq:solution-of-precondtion-s1}.  However, experiments showed that these approaches maintain the performance observed for exact BSR only for $\nu=1$, and there is a degradation for $\nu>1$. Moreover, these approaches are very sensitive to algorithmic parameters.   Instead, we consider solving the Schur
complement system by applying a multigrid $V(2,2 )$-cycle using weighted Jacobi relaxation with weight $\omega_J$. In this test, we  use  $\frac{\omega_B}{\alpha_B}=\frac{3}{4}$ because $1-\frac{\omega_B}{\alpha_B}\frac{a}{\hat{a}}$ is an eigenvalue of the relaxation error-propagation operator for both exact and inexact version. We  found that  there are many choices of $\alpha_B, \omega_B=\frac{3}{4}\alpha_B$ and $ \omega_J$  resulting in good multigrid results.  In our report, we consider $\alpha_B=1.5, \omega_B=\frac{3}{4}\alpha_B$ and $ \omega_J=0.8$.    We observe that using only  2 or 3 $V(2,2)$-cycles on the approximate Schur complement achieves convergence factors essentially matching those in Table \ref{tab:BSR-dirichlet-beta4} for $\nu=1,2$.  We display the inexact BSR results  in Table \ref{tab:IBSR-dirichlet-beta0}, where 3 $V(2,2)$-cycles are used. We see there is a slight degradation  of the measured convergence factors for inexact BSR for $\nu=3,4$ compared with the results in Table \ref{tab:BSR-dirichlet-beta4}. However, for $\nu=1,2$, the measured convergence factors $\hat{\rho}$ agree very well with the results in Table \ref{tab:BSR-dirichlet-beta4}, and are $h$-independent and robust to $\beta$. In practice, we recommand $W(1,0)$-cycle and $W(1,1)$-cycle for inexact BSR.
 
%===================================================================

 \begin{table}[H]
 \caption{$W$-cycles measured convergence factors, $\hat{\rho}_h$,  for inexact stiffness-based BSR with inner 3 $V(2,2)$-cycles and algorithmic parameters, $\alpha_B=1.5, \omega_B=\frac{3}{4}\alpha_B, \omega_J=0.8$.}
\centering
\begin{tabular}{ l  c c c c }
\hline
$\nu$  & 1    & 2   &3    & 4  \\
\hline
\hline
    \multicolumn{5} {c}  {$\beta=10^{-2}$ } \\
\hline
 
$\hat{\rho}_{h=1/64}$              &0.280        &0.092         &0.046          &  0.044    \\  
$\hat{\rho}_{h=1/128}$            &0.275         &0.092         &0.046         & 0.044          \\
$\hat{\rho}_{ h=1/256}$          &0.272         &0.090          &0.047        &0.045       \\
\hline
\hline
    \multicolumn{5} {c}  { $\beta=10^{-4}$ } \\
\hline 
$\hat{\rho}_{h=1/64}$              &0.271         &0.089         &0.045        &  0.043          \\     
$\hat{\rho}_{h=1/128}$            &0.267         &0.088         &0.046         & 0.044          \\ 
 $\hat{\rho}_{h=1/256}$           &0.264         &0.086         &0.047        &0.045          \\ 
\hline
\hline
  \multicolumn{5} {c}  { $\beta=10^{-6}$ } \\
\hline
$\hat{\rho}_{h=1/64}$              &0.283         &0.092          &0.039        &  0.025          \\
$\hat{\rho}_{h=1/128}$            &0.280       &0.091            &0.044        &  0.041           \\
$\hat{\rho}_{ h=1/256}$           &0.276        &0.090           &0.047        &0.045          \\
\hline
\hline
  \multicolumn{5} {c}  { $\beta=10^{-8}$ } \\
\hline
$\hat{\rho}_{h=1/64}$              & 0.296        &0.096         &0.032        &  0.010          \\
$\hat{\rho}_{h=1/128}$           &0.293          &0.095         &0.034         &0.025           \\
$\hat{\rho}_{ h=1/256}$           &0.289         &0.094         &0.043        &0.030          \\
\hline
\end{tabular}\label{tab:IBSR-dirichlet-beta0}
\end{table}
 %================================================================
 Next, we consider the model problem \eqref{eq:Control-problem}  subject to the state equation \cite{rees2010optimal}
\begin{equation}\label{eq:sub-state-equ-nonzero}
\left\{
  \begin{aligned}
  -\triangle u &=&f,\,\,\,\, {\rm in}\,\,\Omega,\\
  u&=&\hat{u}|_{\partial \Omega},\,\,\, {\rm at}\,\,\, \partial \Omega,\\
   \end{aligned}
               \right.  \\
\end{equation}
where $\Omega=[0,1]^2$,  and 
 \begin{equation*} 
\hat{u} = \left\{
  \begin{aligned}
  & (2x-1)^2(2y-1)^2 & \quad   \text{if}\quad (x,y)\in[0,1/2]^2,\\
  & 0 &  \quad  \text{otherwise}.\\
   \end{aligned}
               \right.  \\
\end{equation*}
For this problem, we consider $W$-cycles multigrid methods  with inner 3 $V(2,2)$-cycles  on the Schur complement system with $\alpha_B=1.5, \omega_B=\frac{3}{4}\alpha_B, \omega_J=0.8$.  In Tables \ref{tab:Iteration-number-h-beta-nu1} and   \ref{tab:Iteration-number-h-beta-nu2}, we report the smallest number of iterations $n$   such that $\frac{||d_h^{(n)}||_2}{||d_h^{(0)}||_2}<10^{-10}$ for this distributed control model problem for different values of $\beta$, with $\nu=1$ and $\nu=2$, respectively. Again, from Tables \ref{tab:Iteration-number-h-beta-nu1} and   \ref{tab:Iteration-number-h-beta-nu2}, we see the robustness of our multigrid methods  with respect to the regularization parameters, $\beta$, and grid size $h$.

 \begin{table}[H]
 \caption{Iteration counts for the control model problem with  state equation \eqref{eq:sub-state-equ-nonzero} using  $\nu=1$ and $W$-cycles multigrid for inexact stiffness-based BSR with inner 3 $V(2,2)$-cycles and    algorithmic parameters, $\alpha_B=1.5, \omega_B=\frac{3}{4}\alpha_B, \omega_J=0.8$. }
\centering
\begin{tabular}{ l    c c c c}
\hline
$h$      & $\beta=10^{-2}$  &$\beta=10^{-4}$    &$\beta=10^{-6}$   & $\beta=10^{-8}$  \\
\hline
 $1/64$                     &17          & 17     &   17       &  19    \\
 $1/128$                   &  17             &17        &  17        &   18      \\
 $1/256$                   &18        &18      &18       &18   \\
  $1/512$                 &19       &19      & 19   &19\\
\hline
\end{tabular}\label{tab:Iteration-number-h-beta-nu1}
\end{table}

 \begin{table}[H]
 \caption{Iteration counts for the control model problem with  state equation \eqref{eq:sub-state-equ-nonzero}  using $\nu=2$ and $W$-cycles multigrid for inexact stiffness-based BSR with inner 3 $V(2,2)$-cycles and   algorithmic parameters, $\alpha_B=1.5, \omega_B=\frac{3}{4}\alpha_B, \omega_J=0.8$. }
\centering
\begin{tabular}{ l  c c c c c}
\hline
$h$      & $\beta=10^{-2}$  &$\beta=10^{-4}$    &$\beta=10^{-6}$   & $\beta=10^{-8}$  \\
\hline
 $1/64$                     & 12      & 12          &  11        &  10     \\
 $1/128$                   & 12        &  12      & 12          &  11      \\
 $1/256$                   &13         &13         & 12          &12    \\
  $1/512$                 &13         &13      & 13    &13 \\
\hline
\end{tabular}\label{tab:Iteration-number-h-beta-nu2}
\end{table}
 
\section{Conclusion} \label{sec:concl}
In this work, we present a novel multigrid method for the elliptic distributed control problems,  where stiffness-based  Braess-Sarazin multigrid relaxation scheme is proposed to the discrete saddle-point system of the control problems.  In this relaxation scheme, the stiffness matrix obtained from the five-point finite difference method for the Laplacian is used to approximate the mass matrix in the saddle-point system discretized by a finite element method. We hire LFA to minimize LFA smoothing factor for Braess-Sarazin  relaxation, leading to  a highly satisfactory convergence factor.   For practical use,  we develop  an inexact version of Braess-Sarazin  relaxation,  where a 3 $V(2,2)$-cycles multigrid with weighted Jacobi relaxation is applied to the Schur complement system. This inexact version preserves the optimal smoothing factor observed for exact Braess-Sarazin  relaxation.  Numerical results show the robustness of our multigrid methods with respect to the regularization parameters and grid size.  In future, we will extend the relaxation scheme considered here to  more complex optimal problems, such as the Stokes control problem.

%===================================================================================================================================
\bibliographystyle{siam}
\bibliography{mass_control_bib}

\begin{thebibliography}{10}

\bibitem{borzi2003multigrid}
{\sc A.~Borz{\i}}, {\em Multigrid methods for parabolic distributed optimal
  control problems}, Journal of Computational and Applied Mathematics, 157
  (2003), pp.~365--382.

\bibitem{borzi2007high}
{\sc A.~Borz{\i}}, {\em High-order discretization and multigrid solution of
  elliptic nonlinear constrained optimal control problems}, Journal of
  Computational and Applied Mathematics, 200 (2007), pp.~67--85.

\bibitem{borzi2002accuracy}
{\sc A.~Borz{\`\i}, K.~Kunisch, and D.~Y. Kwak}, {\em Accuracy and convergence
  properties of the finite difference multigrid solution of an optimal control
  optimality system}, SIAM Journal on Control and Optimization, 41 (2002),
  pp.~1477--1497.

\bibitem{borzi2009multigrid}
{\sc A.~Borzi and V.~Schulz}, {\em Multigrid methods for {PDE} optimization},
  SIAM review, 51 (2009), pp.~361--395.

\bibitem{braess1997efficient}
{\sc D.~Braess and R.~Sarazin}, {\em An efficient smoother for the {S}tokes
  problem}, Appl. Numer. Math., 23 (1997), pp.~3--19.

\bibitem{engel2011multigrid}
{\sc M.~Engel and M.~Griebel}, {\em A multigrid method for constrained optimal
  control problems}, Journal of Computational and Applied Mathematics, 235
  (2011), pp.~4368--4388.

\bibitem{CH2021addVanka}
{\sc C.~Greif and Y.~He}, {\em A closed-form multigrid smoothing factor for an
  additive {V}anka-type smoother applied to the {P}oisson equation}, arXiv
  preprint arXiv:2111.03190,  (2021).

\bibitem{YH2021massStokes}
{\sc Y.~He}, {\em New mass-based multigrid relaxation schemes for the {S}tokes
  equations}, arXiv preprint arXiv:2111.04922,  (2021).

\bibitem{kepler2009fourier}
{\sc J.~KEPLER}, {\em Fourier analysis and local {F}ourier analysis for
  multigrid methods},  (2009).

\bibitem{Rees2010github}
{\sc T.~Rees}, {\em Github/poisson-control}.
\newblock \url{https://github.com/tyronerees/poisson-control}, 2010.

\bibitem{rees2010optimal}
{\sc T.~Rees, H.~S. Dollar, and A.~J. Wathen}, {\em Optimal solvers for
  {PDE}-constrained optimization}, SIAM Journal on Scientific Computing, 32
  (2010), pp.~271--298.

\bibitem{schoberl2011robust}
{\sc J.~Sch{\"o}berl, R.~Simon, and W.~Zulehner}, {\em A robust multigrid
  method for elliptic optimal control problems}, SIAM Journal on Numerical
  Analysis, 49 (2011), pp.~1482--1503.

\bibitem{simon2009schwarz}
{\sc R.~Simon and W.~Zulehner}, {\em On schwarz-type smoothers for saddle point
  problems with applications to {PDE}-constrained optimization problems},
  Numerische Mathematik, 111 (2009), pp.~445--468.

\bibitem{takacs2011convergence}
{\sc S.~Takacs and W.~Zulehner}, {\em Convergence analysis of multigrid methods
  with collective point smoothers for optimal control problems}, Computing and
  Visualization in Science, 14 (2011), pp.~131--141.

\bibitem{MR1807961}
{\sc U.~Trottenberg, C.~W. Oosterlee, and A.~Sch{\"u}ller}, {\em Multigrid},
  Academic Press, Inc., San Diego, CA, 2001.
\newblock With contributions by A. Brandt, P. Oswald and K. St{\"u}ben.

\bibitem{wienands2004practical}
{\sc R.~Wienands and W.~Joppich}, {\em Practical {F}ourier analysis for
  multigrid methods}, CRC press, 2004.

\end{thebibliography}
\end{document}